\newtheorem{thm}{Theorem}[section]
\newtheorem{cor}[thm]{Corollary}
\newtheorem{lem}[thm]{Lemma}
\newtheorem{prop}[thm]{Proposition}
\theoremstyle{remark}
\def\f{\frac}
 \def\a{{\alpha}} 
 \def\b{{\beta}}
 \def\g{{\gamma}}
 \def\t{{\theta}}
 \def\l{{\lambda}}
 \def\la{{\langle}}
 \def\ra{{\rangle}}
 \def\CV{{\mathcal V}}
 \def\CW{{\mathcal W}}
 \def\NN{{\mathbb N}}
 \def\RR{{\mathbb R}}
\newif\ifpdf
\begin{document}
\title[Minimal cubature rules on unbounded domain]
{Minimal cubature rules on an unbounded domain}
 
\author{Yuan Xu}
\address{Department of Mathematics\\ University of Oregon\\
    Eugene, Oregon 97403-1222.}\email{yuan@math.uoregon.edu}
\thanks{The work of the third author was supported in part by 
NSF Grant DMS-1106113.}

\date{\today}
\keywords{Minimal cubature rules, orthogonal polynomials, unbounded domain}
\subjclass[2000]{41A05, 65D05, 65D32}

\begin{abstract}
A family of minimal cubature rules is established on an unbounded domain, which 
is the first such family known on unbounded domains. The nodes of such cubature
rules are common zeros of certain orthogonal polynomials on the unbounded domain,
which are also constructed. 
\end{abstract}

\maketitle

\section{Introduction}
\setcounter{equation}{0}

In two or more variables, few families of minimal cubature rules are known in the literature,
none on unbounded domains. The purpose of this note is to record a family of minimal cubature 
rules on an unbounded domain. 

The precision of a cubature rule is usually measured by the degrees of polynomials that can be
evaluated exactly. For a nonnegative integer $m$, we denote be $\Pi_m^2$ the space of polynomials 
of degree at most $m$. Let $\Omega$ be a domain in $\RR^2$ and let $W$ be a non-negative weight 
function on $\Omega$. A cubature rule of precision $2n-1$ with respect to $W$ is a finite sum that 
satisfies
\begin{equation} \label{cuba-generic}
        \int_\Omega f(x,y) W(x,y) dxdy = \sum_{k=1}^N \l_k f(x_k,y_k), 
             \qquad \forall f\in \Pi_{2n-1}^2, 
\end{equation}
and there exists at least one function $f^*$ in $\Pi_{2n}^2$ such that the equation \eqref{cuba-generic} 
does not hold. A cubature rule in the form of \eqref{cuba-generic} is called {\it minimal}, if its number of
nodes is the smallest among all cubature rules of the same precision for the same integral. 

It is well known that the number of nodes, $N$, of \eqref{cuba-generic} satisfies (cf. \cite{My, St}), 
\begin{equation}\label{lbdGaussian}
   N \ge \dim \Pi_{n-1}^2 =  \frac{n (n+1)}{2}.
\end{equation}
A cubature rule of degree $2n-1$ with $N =  \dim \Pi_{n-1}^2 $ is called Gaussian. In contrast to 
the Gaussian quadrature of one variable, Gaussian cubature rules rarely exists and there are two 
family of examples known \cite{LSX, SX}, both on bounded domains. It is known that they do not 
exist if $W$ is centrally symmetric, which means that $W(x) = W(-x)$ and $- x \in \Omega$ 
whenever $x \in \Omega$. In fact, in the centrally symmetric case, the number of nodes of 
\eqref{cuba-generic} satisfies a better lower bound \cite{M}, 
\begin{equation}\label{lwbd}
     N \ge \dim \Pi_{n-1}^2 + \left \lfloor \frac{n}{2} \right \rfloor 
                   = \frac{n(n+1)}{2} +  \left \lfloor \frac{n}{2} \right \rfloor. 
\end{equation}

A cubature rule whose number of nodes attains a known lower bound is necessarily minimal. It turns 
out that a family of weight functions $\CW_{\a,\b, \pm \f12}$ defined by 
\begin{equation} \label{CWab}
 \CW_{\a,\b,\pm \frac12}(x,y) : = |x+y|^{2 \a+1} |x- y|^{2 \beta +1}
          (1-x^2)^{\pm \frac12}(1-y^2)^{\pm \frac12}, \quad \a,\b > -1,
\end{equation}
on $[-1,1]^2$ admits minimal cubature rules of degree $4n-1$ on the square $[-1,1]^2$, which was 
established in \cite{MP} for $\a =\b = 0$ (see also \cite{BP, X94}) and in \cite{X12a} for $(\a,\b) \ne (0,0)$.
There are not many other cases for which minimal cubature rules are known to exist for all $n$ and 
none in the literature that are known for unbounded domains. 

To establish the minimal cubature rules for $\CW_{\a,\b,\pm 12}$, the starting point in \cite{X12a} is 
the Gaussian cubature rules in \cite{SX} and it amounts to a series of changing of variables from the 
product Jacobi weight function on the square $[-1,1]^2$ to the weight function $\CW_{\a,\b, \pm 12}$. 
The procedure works for general product weight function on the square. Moreover, as we shall shown
in this note, that the procedure also works for an unbounded domain, which leads to our main results
in this note. 
The minimal cubature rules are known to be closely related to orthogonal polynomials, as 
their nodes are necessarily zeros of certain orthogonal polynomials. We will discuss this connection and 
construct an explicit orthogonal basis on our unbounded domain in Section 3. 

\section{Gaussian Cubature rules on an unbounded domain}
\setcounter{equation}{0}

Let $W$ be a nonnegative weight function on a domain $\Omega \subset \RR^2$. A polynomial 
$P \in \Pi_n^d$ is an orthogonal polynomial of degree $n$ with respect to $W$ if 
$$
   \int_{\Omega} P_{k,n}(x,y) q(x,y) W(x,y) dxdy = 0, \qquad \forall q \in \Pi_{n-1}^2.
$$
Let $\CV_n^2$ be the space of orthogonal polynomials of degree exactly $n$. Then $\dim \CV_n^2 
= n +1$. The Gaussian cubature rules can be characterized in terms of the common zeros of 
elements in $\CV_n^2$ (\cite{My,St}).  

\begin{thm}
Let $\{P_{k,n}: 0 \le k \le n\}$ be a basis of $\CV_n^2$. Then a Gaussian cubature rule of degree 
$2n-1$ for the integral against $W$ exists if and only if its nodes are the common zeros of 
$P_{k,n}$, $0 \le k \le n$.
\end{thm}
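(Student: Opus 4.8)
The plan is to prove both directions of the equivalence, using the lower bound \eqref{lbdGaussian} together with a dimension count that matches the number of common zeros against $\dim \Pi_{n-1}^2$. The key structural fact I would rely on is the standard theory of orthogonal polynomials in several variables: the quotient reproducing/ideal structure of $\Pi^2$ modulo the ideal generated by $\CV_n^2$, and the relationship between nodes of a cubature rule and the polynomials that vanish there.

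\medskip
For the \emph{necessity} direction, suppose a Gaussian cubature rule of degree $2n-1$ with nodes $\{(x_k,y_k)\}_{k=1}^N$, $N=\dim\Pi_{n-1}^2$, exists. Fix any $P\in\CV_n^2$. I would pair $P$ against an arbitrary $q\in\Pi_{n-1}^2$: on one hand $\int_\Omega P\,q\,W\,dxdy=0$ by orthogonality, since $\deg(Pq)\le 2n-1$; on the other hand the cubature rule evaluates this integral exactly as $\sum_k \l_k P(x_k,y_k)\,q(x_k,y_k)$. Hence $\sum_k \l_k P(x_k,y_k)\,q(x_k,y_k)=0$ for all $q\in\Pi_{n-1}^2$. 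The crux is then a nondegeneracy argument: because $N=\dim\Pi_{n-1}^2$ and the nodes of a minimal (Gaussian) rule are in ``general position,'' the evaluation functionals $q\mapsto (q(x_1,y_1),\dots,q(x_N,y_N))$ form an isomorphism from $\Pi_{n-1}^2$ onto $\RR^N$, so one can choose $q$ to be (up to the weights $\l_k$, which are positive) a Lagrange-type interpolant isolating each node. This forces $P(x_k,y_k)=0$ for every $k$, i.e.\ each node is a common zero of all $P\in\CV_n^2$.

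\medskip
For the \emph{sufficiency} direction, suppose the polynomials $P_{0,n},\dots,P_{n,n}$ have exactly $N=\dim\Pi_{n-1}^2$ common zeros $\{(x_k,y_k)\}$. The plan is to build the cubature rule by Lagrange interpolation: one shows that interpolation at these $N$ nodes is poised on $\Pi_{n-1}^2$ (again using that $N=\dim\Pi_{n-1}^2$ and that the ideal-theoretic variety of $\CV_n^2$ has the quotient $\Pi^2/\langle\CV_n^2\rangle$ identified with $\Pi_{n-1}^2$), define $\l_k:=\int_\Omega \ell_k\,W\,dxdy$ for the Lagrange fundamental polynomials $\ell_k\in\Pi_{n-1}^2$, and verify that \eqref{cuba-generic} holds for all $f\in\Pi_{2n-1}^2$. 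The verification splits $f$ via the division algorithm as $f=\sum_k a_k P_{k,n}+r$ with $r\in\Pi_{n-1}^2$; the orthogonal part integrates to zero and vanishes at every node, while the remainder is reproduced exactly by the interpolatory rule.

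\medskip
The main obstacle in both directions is the algebraic geometry of the common zero set: one must justify that $\CV_n^2$ having the \emph{maximal} possible number of common zeros $\dim\Pi_{n-1}^2$ is equivalent to the interpolation problem on $\Pi_{n-1}^2$ being poised, and that these common zeros are real, distinct, and lie inside $\Omega$. This is exactly where the several-variable theory departs from the one-variable case, since in dimension $d\ge 2$ the ideal generated by $\CV_n^2$ need not have the right number of zeros at all (which is precisely why Gaussian cubature rarely exists). I expect to invoke the characterization that common zeros of $\CV_n^2$ correspond to a commuting family of truncated Jacobi (block) matrices, so that enough common zeros exist precisely when these matrices commute; that commutativity is the hard hypothesis hidden inside the phrase ``if and only if its nodes are the common zeros,'' and I would cite the Mysovskikh--Stroud framework in \cite{My,St} for the precise statement rather than reprove it from scratch.
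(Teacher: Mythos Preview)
The paper does not actually prove this statement; it is stated as a classical characterization and attributed to \cite{My,St} immediately before the theorem. So there is no ``paper's own proof'' to compare against --- the author simply quotes the result from the literature.

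Your sketch is a reasonable outline of the standard argument found in those references, and in that sense it goes beyond what the paper itself supplies. One small correction: in the sufficiency direction you write ``$f=\sum_k a_k P_{k,n}+r$ with $r\in\Pi_{n-1}^2$,'' but for a general $f\in\Pi_{2n-1}^2$ the coefficients $a_k$ must be polynomials of degree at most $n-1$, not scalars; with that fix the orthogonality argument goes through. You are also right to flag that the real substance lies in the algebraic-geometric fact that $\CV_n^2$ having exactly $\dim\Pi_{n-1}^2$ common zeros is a genuine and rarely satisfied condition --- this is precisely what the cited works develop, and since the paper itself defers to \cite{My,St}, your instinct to do the same is appropriate.
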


We now describe a family of Gaussian cubature rules on an unbounded domain. Let $w(x)$ be a 
nonnegative weight function defined on the unbounded domain $[1, \infty)$ and let $c_w$ denote 
its normalization constant defined by $c_w \int_{1}^\infty w(x) dx =1$. Let $p_n(w;x)$ be the orthogonal 
polynomial of degree $n$ with respect to $w$ and let $x_{1,n}, x_{2,n},\ldots, x_{n,n}$ be the zeros of 
$p_n(w;x)$. It is well known that $x_{k,n}$ are real and distinct points in $[1,\infty)$. 
The Gaussian quadrature rule for the integral against $w$ is given by 
\begin{equation} \label{Gauss-quad}
  c_w \int_{1}^\infty f(x) w(x) dx = \sum_{k=1}^n \l_{k,n} f(x_{k,n}), \quad f \in \Pi_{2n-1},
\end{equation}
where $\Pi_{2n-1}$ denotes the space of polynomials of degree $2n-1$ in one variable and the 
weights $\l_{k,n}$ are known to be all positive. 

A typical example of $w$ is the shifted Laguerre weight function 
$$
 w_\a(x) := (x-1)^\a e^{-x+1}, \quad \a > -1,
$$
for which the orthogonal polynomial $p_n(w_\a;x)$ is the Laguerre polynomial $L_n^\a(x-1)$ with
argument $x-1$. 

The unbounded domain on which our Gaussian cubature rules live is given by 
\begin{equation}   \label{Omega} 
   \Omega: = \{(u,v): 0 < u-1 < v < u^2/4\}, 
\end{equation}
which is bounded by a parabola and a line for $u \ge 1$ and $v \ge 2$, which is the shaded area
depicted in Figure 1. 
\begin{figure}[ht]
\includegraphics[scale=0.48]{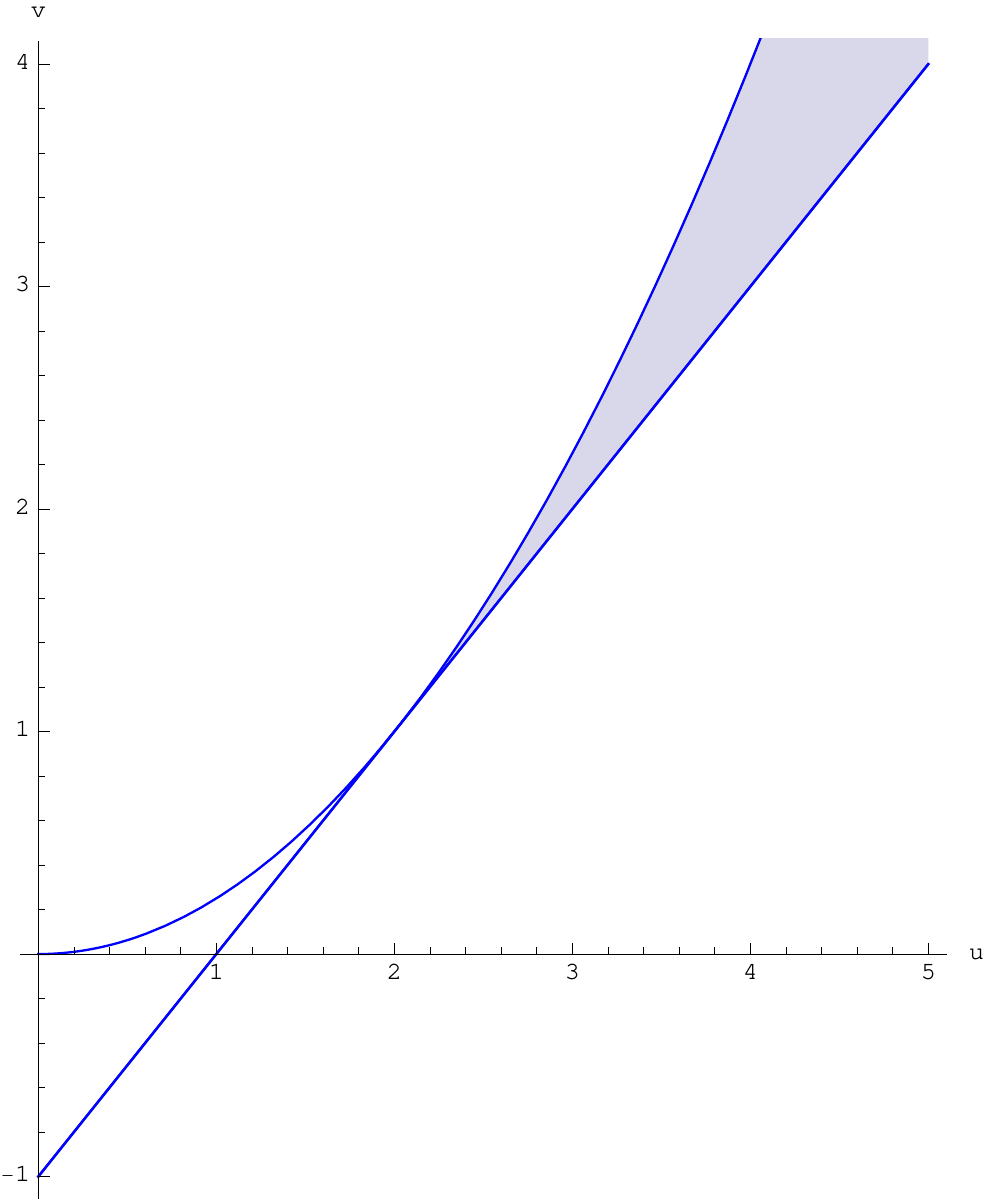}
\caption{Domain $\Omega$}
\label{figure:region} 
\end{figure}

The function $w(x)w(y)$ is evidently a symmetric function in $x$ and $y$. For $\g > -1$, we define the 
weight
\begin{equation}   \label{Wgamma} 
    W_\gamma(u,v) :=   w(x)w(y) |u^2 - 4 v|^{\gamma}, \quad \g > -1, 
\end{equation}
where the variables $(x,y)$ and $(u,v)$ are related by 
\begin{equation}   \label{u-v} 
        u = x+y, \quad v= xy.
\end{equation}
The function $w(x)w(y)$ is obviously symmetric in $x$ and $y$, so that it can be written as a function 
of $(u,v)$ for $(x,y)$ in the domain 
$$
 \triangle: = \{(x,y): 1 < x < y < \infty\}.
$$
The function $W_\gamma(u,v)$ is the image of $w(x)w(y)|x-y|^{2\g+1}$ under the changing of 
variables $u =x+y$ and $v = xy$, which has a Jacobian $|x-y|$ and $|x-y| = \sqrt{u^2 - 4 v}$. When
$w = w_\a$ is the shifted Laguerre weight, we denote $W_\g$ by $W_{\a,\g}$, which is given 
explicitly by
\begin{equation*}
W_{\a,\g}(x,y) := (v - u +1)^\a e^{-u -2} (u^2 - 4 v)^\g. 
\end{equation*}

The changing of variables \eqref{u-v} immediately leads to the relation
\begin{align} \label{Para-Square}
  \int_{\Omega} f(u,v) W_\g(u,v) dudv & = 
         \int_{\triangle}  f(x+y, x y) w(x)w(y)|x-y|^{2\g+1} dxdy \\
       & = \f12 \int_{[1,\infty)^2}  f(x+y, x y) w(x)w(y)|x-y|^{2\g+1} dxdy, \notag
\end{align}
where the second equation follows from symmetry. Now, recall that $x_{k,n}$ denote the zeros of $p_n(w;x)$. 
We define
$$
    u_{k,j} = x_{k,n}+x_{j,n}, \qquad v_{k,j} = x_{k,n} x_{j,n}, \qquad 0 \le j \le k \le n. 
$$

\begin{thm} \label{thm:Gaussian}
For $W_{- \frac12}$ on $\Omega$, the Gaussian cubature rule of degree $2n-1$ is 
\begin{equation} \label{GaussCuba-}
   c_w^2 \int_{\Omega} f(u, v) W_{-\frac12}(u,v) du dv = \sum_{k=1}^n \mathop{ {\sum}' }_{j=1}^k  
          \l_k \l_j f(u_{j,k}, v_{j,k}), \quad f \in \Pi_{2n-1}^2, 
\end{equation}
where ${\sum}'$ means that the term for $j = k$ is divided by 2. For $W_{\frac12}$ on 
$\Omega$, the Gaussian cubature rule of degree $2n-3$ is 
\begin{equation} \label{GaussCuba+}
   c_w^2 \int_{\Omega} f(u, v) W_{\frac12}(u,v) du dv =  \sum_{k=2}^n \sum_{j=1}^{k-1}  
          \l_{j,k} f(u_{j,k}, v_{j,k}), \quad f \in \Pi_{2n-3}^2, 
\end{equation} 
where $\lambda_{j,k} = \l_j \l_k (x_{j,n} - x_{k,n})^2$. 
\end{thm}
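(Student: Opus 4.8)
The plan is to pull both cubature rules back to the ordinary \emph{product} Gauss quadrature on $[1,\infty)^2$ by means of the identity \eqref{Para-Square}, and then to exploit the symmetry of the resulting double sum. The one structural fact that makes this work is a degree count: if $f\in\Pi_m^2$ has total degree $m$ in $(u,v)$, then since $u=x+y$ and $v=xy$ each have degree one in $x$ and in $y$, the composition $f(x+y,xy)$ lies in the tensor space $\Pi_m\otimes\Pi_m$ of polynomials of degree at most $m$ in each variable separately. The zeros $x_{1,n},\dots,x_{n,n}$ of $p_n(w;\cdot)$ and the weights $\lambda_k:=\lambda_{k,n}$ give the one-dimensional rule \eqref{Gauss-quad} of precision $2n-1$, so the product rule
$$
 c_w^2\int_{[1,\infty)^2} g(x,y)\,w(x)w(y)\,dx\,dy=\sum_{k=1}^n\sum_{j=1}^n \lambda_k\lambda_j\,g(x_{k,n},x_{j,n})
$$
is exact for every $g\in\Pi_{2n-1}\otimes\Pi_{2n-1}$.

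For the first assertion, take $f\in\Pi_{2n-1}^2$. With $\gamma=-\tfrac12$ the Jacobian factor in \eqref{Para-Square} is $|x-y|^{2\gamma+1}=1$, so
$$
 c_w^2\int_{\Omega} f\,W_{-\frac12}\,du\,dv=\frac{c_w^2}{2}\int_{[1,\infty)^2} f(x+y,xy)\,w(x)w(y)\,dx\,dy .
$$
By the degree count $g(x,y)=f(x+y,xy)$ lies in $\Pi_{2n-1}\otimes\Pi_{2n-1}$, so the product rule applies and produces $\tfrac12\sum_{k,j}\lambda_k\lambda_j f(u_{j,k},v_{j,k})$. Since $f(x+y,xy)$ is symmetric, the summand is invariant under $k\leftrightarrow j$; collapsing the double sum onto pairs $1\le j\le k\le n$ pairs the two off-diagonal copies and leaves each diagonal term $j=k$ counted once against the factor $\tfrac12$, which is exactly the convention recorded by ${\sum}'$. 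This yields \eqref{GaussCuba-}. The nodes are $(u_{j,k},v_{j,k})$ with $1\le j\le k\le n$; since $\{x_{j,n},x_{k,n}\}$ is recovered from $(u_{j,k},v_{j,k})$ as the root pair of $t^2-u_{j,k}t+v_{j,k}$ and the $x_{k,n}$ are distinct, the nodes are distinct, and there are $\sum_{k=1}^n k=\tfrac{n(n+1)}2=\dim\Pi_{n-1}^2$ of them. By the lower bound \eqref{lbdGaussian} the rule is minimal, hence Gaussian.

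The second assertion runs in parallel. For $\gamma=\tfrac12$ the Jacobian factor is $|x-y|^{2\gamma+1}=(x-y)^2$, and for $f\in\Pi_{2n-3}^2$ the product $f(x+y,xy)(x-y)^2$ lies in $\Pi_{2n-1}\otimes\Pi_{2n-1}$, again within reach of the product rule. The decisive difference is that the extra factor $(x_j-x_k)^2$ kills every diagonal term $j=k$, so only off-diagonal nodes survive; folding the symmetric sum onto $1\le j<k\le n$ produces the weights $\lambda_j\lambda_k(x_{j,n}-x_{k,n})^2=\lambda_{j,k}$ and gives \eqref{GaussCuba+}. The node count is now $\binom{n}{2}=\tfrac{n(n-1)}2=\dim\Pi_{n-2}^2$, precisely the Gaussian bound for precision $2(n-1)-1=2n-3$, so this rule is Gaussian as well.

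The computations are otherwise routine; the only point demanding genuine care is the degree bookkeeping of the first paragraph, since everything hinges on the pulled-back integrand remaining inside $\Pi_{2n-1}\otimes\Pi_{2n-1}$ so that the product Gauss rule is exact: were the total degree of $f$ one higher, the composition would escape that space and the identities would break. Matching the node counts against \eqref{lbdGaussian} then upgrades ``cubature of precision $2n-1$'' (respectively $2n-3$) to ``Gaussian'', and by Theorem~2.1 identifies the nodes as the common zeros of the degree-$n$ orthogonal polynomials for $W_{\pm\frac12}$ on $\Omega$.
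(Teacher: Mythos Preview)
Your proof is correct and follows essentially the same route as the paper: pull back to $[1,\infty)^2$ via \eqref{Para-Square}, apply the product Gauss rule, and fold by symmetry. The only cosmetic difference is that where the paper invokes Sobolev's theorem on invariant cubature rules, you make the underlying degree count $f(x+y,xy)\in\Pi_{2n-1}\otimes\Pi_{2n-1}$ explicit, which is arguably more self-contained.
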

 
The proof follows almost verbatim from the proof of Theorem 3.1 in \cite{X12a}. In fact, by 
\eqref{Para-Square}, the cubature rule \eqref{GaussCuba-} is equivalent to the cubature
rule for $w(x) w(y) dxdy$ on $[1,\infty)^2$ for polynomials $f(x+y,xy)$ with $f \in \Pi_{2n-1}^2$,
which are symmetric polynomials in $x$ and $y$. By the Sobolev's theorem on invariant cubature
rules \cite{Sobolev}, this cubature rule is equivalent to the product Gaussian cubature rules for 
$w(x)w(y)$ on $[1,\infty)^2$, which is the product of \eqref{Gauss-quad}. Thus, the cubature rule \eqref{GaussCuba-} follows from the product of Gaussian cubature rules under \eqref{u-v}. 

The existence of these cubature rules also follow from counting common zeros of the orthogonal 
polynomials with respect to $W_{\pm \f12}$. Indeed, a mutually orthogonal basis with respect to 
$W_{-\frac12}$ on $\Omega$ is given by 
\begin{equation} \label{OP-1/2}
    P_{k,n}^{(-\frac12)} (u,v) = p_n(x) p_k(y) + p_n(y) p_k(x), \qquad 0 \le k \le n, 
\end{equation}
and a mutually orthogonal basis with respect to $W_{\frac12}$ is given by 
\begin{equation} \label{OP+1/2}
    P_{k,n}^{(\frac12)} (u,v) = \frac{p_{n+1}(x) p_k(y) - p_{n+1} (y) p_k(x)}{x-y}, \quad 0 \le k \le n, 
\end{equation}
both families are defined under the mapping \eqref{u-v}. This was established in \cite{K74} for the
domain $[-1,1]^2$, but the proof can be easily extended to our $\Omega$. It is easy to see that 
the elements of $\{(u_{j,k}, v_{j,k}): 0 \le j \le k \le n\}$ are common zeros of $P_{k,n}^{(-\f12)}$, 
$0 \le k \le n$, and the cardinality of this set is $\dim \Pi_{n-1}^2$, which implies, by Theorem 2.1,
that the Gaussian cubature rules for $W_{-\f12}$ exists. The proof for $W_{\f 12}$ works similarly. 

\section{Minimal cubature rules on an unbounded domain}
\setcounter{equation}{0}

We are looking for cubature rules of degree $2n-1$ that satisfy the lower bound \eqref{lwbd},
which are necessarily minimal cubature rules. Such cubature rules are characterized by common 
zeros of a subspace of $\CV_n^2$ (\cite{M}). 

\begin{thm}\label{thm:minimalCuba}
A cubature rule whose number of nodes attains the lower bound \eqref{lwbd} exists if, and only if, 
its nodes are common zeros of $\lfloor \frac{n+1}{2} \rfloor +1$ orthogonal polynomials of degree $n$. 
\end{thm}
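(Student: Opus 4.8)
The plan is to prove the two implications separately, following the pattern of the Gaussian characterization in Theorem~2.1 while tracking how many orthogonal polynomials of degree $n$ are forced to vanish at the nodes. Write $\langle f,g\rangle=\int_\Omega fg\,W\,dxdy$ and, for a node set $\{x_k\}_{k=1}^N$, set $I:=\{P\in\Pi_n^2: P(x_k)=0,\ 1\le k\le N\}$. For necessity, assume a rule of degree $2n-1$ has $N=\dim\Pi_{n-1}^2+\lfloor n/2\rfloor$ nodes. If $P\in I$ and $q\in\Pi_{n-1}^2$ then $\deg(Pq)\le 2n-1$, so exactness gives $\langle P,q\rangle=\sum_k\lambda_k P(x_k)q(x_k)=0$; hence $P\perp\Pi_{n-1}^2$, and since $\Pi_n^2=\Pi_{n-1}^2\oplus\CV_n^2$ we get $I\subseteq\CV_n^2$, so each element of $I$ is a genuine orthogonal polynomial of degree $n$. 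A dimension count on the evaluation map $\Pi_n^2\to\RR^N$ then gives $\dim I\ge\dim\Pi_n^2-N=(n+1)-\lfloor n/2\rfloor=\lfloor(n+1)/2\rfloor+1$, so the nodes are common zeros of at least $\lfloor(n+1)/2\rfloor+1$ independent orthogonal polynomials of degree $n$.

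The converse is the substantive direction. Given a subspace $U\subseteq\CV_n^2$ with $\dim U=\lfloor(n+1)/2\rfloor+1$, I want to realize its common zero set as the nodes of a rule of degree $2n-1$ meeting \eqref{lwbd}. The steps are: (i) using the central symmetry of $W$, show that $U$ has exactly $N=\dim\Pi_{n-1}^2+\lfloor n/2\rfloor$ common zeros, all real, simple, lying in $\Omega$, and imposing independent conditions on $\Pi_{n-1}^2$; (ii) solve uniquely for the weights $\lambda_k$ from exactness on $\Pi_{n-1}^2$, and check positivity via the Christoffel/reproducing-kernel representation; (iii) verify exactness on all of $\Pi_{2n-1}^2$ by decomposing each $f\in\Pi_{2n-1}^2$ as an interpolant at the nodes plus a member of the polynomial ideal generated by $U$, the latter being integrated to zero against $W$ and vanishing at every node. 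Since $N$ then meets the lower bound \eqref{lwbd}, the rule is automatically minimal.

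The main obstacle is the interplay of steps (i) and (iii): controlling the variety of $U$ (that it has precisely $N$ real simple points in $\Omega$ in general position with respect to $\Pi_{n-1}^2$) and, above all, propagating exactness from degree $2n-2$ up to the full degree $2n-1$. This is exactly where the central symmetry and the specific dimension $\lfloor(n+1)/2\rfloor+1$ are indispensable: splitting $\CV_n^2$ into even and odd parts under $x\mapsto-x$ and examining the three-term recurrence matrices shows that a subspace of this dimension generates an ideal of the correct codimension, which is what lets the degree be pushed the final step to $2n-1$. I expect the bookkeeping of this parity-and-ideal argument, rather than any single estimate, to be where the real work lies; the necessity direction and the positivity of the weights are comparatively routine.
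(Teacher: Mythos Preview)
The paper does not actually prove this theorem; it is quoted from M\"oller \cite{M} as background, so there is no ``paper's own proof'' to compare against. That said, your proposal should still be measured against the correct argument.

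Your necessity direction is fine: any $P\in\Pi_n^2$ vanishing on all nodes is orthogonal to $\Pi_{n-1}^2$ by exactness of degree $2n-1$, so it lies in $\CV_n^2$, and the rank--nullity count gives at least $\lfloor(n+1)/2\rfloor+1$ such polynomials. This is the standard (and correct) proof.

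The sufficiency direction, however, is set up incorrectly. You start from an \emph{arbitrary} subspace $U\subseteq\CV_n^2$ of dimension $\lfloor(n+1)/2\rfloor+1$ and claim in step~(i) that central symmetry forces $U$ to have exactly $N=\dim\Pi_{n-1}^2+\lfloor n/2\rfloor$ real simple common zeros in $\Omega$. This is false: a generic subspace of $\CV_n^2$ of that dimension need not have any real common zeros at all, let alone $N$ of them. Central symmetry only gives the parity splitting of $\CV_n^2$; it does not manufacture zeros. In M\"oller's theorem the hypothesis on the ``if'' side is precisely that one can find $\lfloor(n+1)/2\rfloor+1$ orthogonal polynomials of degree $n$ \emph{possessing} $N$ distinct real common zeros --- this is a nontrivial condition on $W$ that fails for most weights (which is why so few minimal cubature rules are known). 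Once that hypothesis is in hand, your steps~(ii)--(iii) are the right shape: solve for the weights by interpolation and push exactness from $\Pi_{2n-2}^2$ to $\Pi_{2n-1}^2$ via the ideal generated by $U$ together with the three-term relations. But step~(i) as you wrote it is not a step one can carry out; it is an assumption one must make.
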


Let $w$ be the weight
function defined on $[1,\infty)$ and let $W_\g$ be the corresponding weight function in \eqref{Wgamma}
defined on $\Omega$ in \eqref{Omega}. We define a family of new weight functions by
\begin{equation}\label{CWgamma}
     \CW_\gamma (x,y): = W_\g (2 x y, x^2+y^2 -1)|x^2-y^2|, \qquad (x,y) \in G,
\end{equation}
where the domain $G$ is centrally symmetric and defined by 
$$
  G :=  (-\infty, -1]^2 \cup [1,\infty)^2.
$$
Thus, the weight function $\CW_\gamma$ is centrally symmetric on $G$. 

That $\CW_\gamma$ is well-defined on $G$ is established in the next lemma. Recall that 
$\triangle = \{ (x,y):  1 < x < y < \infty\}$. 

\begin{lem} \label{Int-Para-cube}
The mapping $(x,y)  \mapsto (2 x y , x^2+y^2 -1)$ is a bijection from $\triangle$ onto 
$\Omega$. Furthermore,
\begin{align} \label{Int-P-Q}
 \int_{\Omega} f(u,v) W_{\g} (u,v) du dv  = \int_{G} f(2xy, x^2+y^2 -1) \CW_{\g}(x,y) dx dy.
\end{align}
\end{lem}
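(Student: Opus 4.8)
The plan is to handle the two assertions in turn: first establish that $\Phi(x,y):=(2xy,\,x^2+y^2-1)$ is a bijection $\triangle\to\Omega$, and then combine this bijection with the change-of-variables formula and the central symmetry of $G$ to derive the integral identity \eqref{Int-P-Q}.

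For the bijection, I would first verify the inclusion $\Phi(\triangle)\subseteq\Omega$ by checking the three defining inequalities of $\Omega$ directly. Writing $u=2xy$ and $v=x^2+y^2-1$, for $1<x<y$ one has $u=2xy>2$, so in particular $u-1>0$; the inequality $u-1<v$ reduces to $2xy<x^2+y^2$, i.e. $(x-y)^2>0$; and $v<u^2/4$ reduces to $x^2+y^2-1<x^2y^2$, i.e. $(x^2-1)(y^2-1)>0$. All three hold strictly on $\triangle$. For surjectivity and injectivity I would exhibit an explicit inverse: from $u=2xy$ and $v+1=x^2+y^2$ one obtains $(x+y)^2=u+v+1$ and $(y-x)^2=v+1-u$, whose right-hand sides are positive on $\Omega$ (the second because $v>u-1$). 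Hence $x+y=\sqrt{u+v+1}$ and $y-x=\sqrt{v+1-u}$ determine $x,y$ uniquely, giving $x<y$; a short computation confirms $x>1$, the inequality $x>1$ corresponding exactly to the remaining constraint $v<u^2/4$ (together with $u>2$, which holds on the image). This two-sided inverse shows $\Phi|_\triangle$ is a bijection onto $\Omega$.

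For the integral identity I would compute the Jacobian of $\Phi$, namely $\det\begin{pmatrix}2y & 2x\\ 2x & 2y\end{pmatrix}=4(y^2-x^2)$, so that $|J|=4|x^2-y^2|$, which is precisely the factor appearing in the definition \eqref{CWgamma} of $\CW_\g$. The structural point is that both $2xy$ and $x^2+y^2-1$ are invariant under the order-four group generated by $(x,y)\mapsto(y,x)$ and $(x,y)\mapsto(-x,-y)$, and that $G$ decomposes, up to a set of measure zero, into the four images of $\triangle$ under this group; by the invariance of $\Phi$ and the bijection just established, each of these four pieces is mapped bijectively onto $\Omega$. Applying the change of variables $\Phi$ on each piece, the factor $|x^2-y^2|$ in $\CW_\g$ cancels against $|J|^{-1}=(4|x^2-y^2|)^{-1}$, so each piece contributes $\tfrac14\int_\Omega f(u,v)\,W_\g(u,v)\,du\,dv$; summing the four equal contributions yields \eqref{Int-P-Q} with the correct constant $1$.

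I expect the main obstacle to be the bookkeeping in this last step: identifying the four pieces precisely, confirming that each is carried onto all of $\Omega$, and thereby seeing that the four copies exactly compensate the factor $4$ in the Jacobian. The other delicate point is the verification $1<x<y$ for the inverse map, in particular matching the condition $x>1$ with the defining inequality $v<u^2/4$ of $\Omega$; once these two checks are in place, the remaining calculations are routine.
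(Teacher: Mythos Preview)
Your proof is correct and complete, but the route differs from the paper's. You argue directly: check the three defining inequalities of $\Omega$ algebraically, then build an explicit inverse from $(x+y)^2=u+v+1$ and $(y-x)^2=v+1-u$. The paper instead introduces a hyperbolic parametrization $x=\cosh\theta$, $y=\cosh\phi$ and observes the identities
\[
2xy=\cosh(\theta-\phi)+\cosh(\theta+\phi),\qquad x^2+y^2-1=\cosh(\theta-\phi)\cosh(\theta+\phi),
\]
so that $\Phi$ factors as $(\cosh\theta,\cosh\phi)\mapsto(\cosh(\phi-\theta),\cosh(\theta+\phi))\mapsto(X+Y,XY)$. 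The first factor is a bijection $\triangle\to\triangle$ and the second is the elementary symmetric map $\triangle\to\Omega$ already used earlier in the paper, so bijectivity follows without any new computation. Your approach is more self-contained and elementary; the paper's approach is shorter because it recycles the known $(X,Y)\mapsto(X+Y,XY)$ bijection, and it also explains conceptually why the nodes $s_{j,k},t_{j,k}$ in the subsequent theorem are given by half-angle hyperbolic cosines. For the integral identity the two arguments coincide: both compute the Jacobian $4|x^2-y^2|$ and use the fourfold symmetry of $G$ over $\triangle$ to cancel the factor $4$.

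One small remark on your surjectivity step: you invoke $u>2$ ``which holds on the image,'' but at that point you are starting from $(u,v)\in\Omega$, not from a point already known to lie in the image. What you actually need (and what your computation $(x-1)(y-1)=u/2+1-\sqrt{u+v+1}>0\iff u^2/4>v$ shows) is that $x>1$ follows from $v<u^2/4$ \emph{together with} $u>2$; the latter should be read off from the description of $\Omega$ itself, or equivalently from noting that the region $\{0<u-1<v<u^2/4\}$ with $u\le 2$ is disjoint from the intended $\Omega$ pictured in the paper. This is a wording issue rather than a mathematical gap.
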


\begin{proof}
Recall that if $(x,y) \in \triangle$, then $(x+y,xy) \in \Omega$ and the mapping is one-to-one. 
For $(x,y) \in [1,\infty)^2$, let us write $x = \cosh \t$ and $y = \cosh \phi$, $0 \le \t, \phi \le \pi$. Then 
it is easy to verify that
\begin{equation} \label{theta-phi}
    2 x y = \cosh (\t - \phi) + \cosh (\t + \phi), \quad x^2+y^2 -1 = \cosh (\t -  \phi) \cosh (\t + \phi), 
\end{equation}
from which it follows readily that $(2xy, x^2+y^2 -1) \in \Omega$ whenever $(x,y) \in \triangle$. 
The Jacobian of the change of variables $u= 2xy$ and $v = x^2+y^2 -1$ is $4 |x^2-y^2|$, so that
the mapping is a bijection. Since $dudv = 4 |x^2-y^2 |dxdy$ and the area of $G$ is four 
times of $\triangle$, the formula \eqref{theta-phi} follows from the change of variables,  the 
integral \eqref{Para-Square} and the fact that $ f(2xy, x^2+y^2 -1)$ is central symmetric
on $G$.
\end{proof}

In the case of $W_\g = W_{\a,\g}$, we denote the weight function $\CW_\g$ by $\CW_{\a,\g}$, which
is given explicitly by 
\begin{equation} \label{CW_a,g}
  \CW_{\a,\g} (x,y) := 4^\g |x-y|^{2 \a} (x^2-1)^\g (y^2-1)^\g |x^2-y^2| e^{-2xy-2}. 
\end{equation}

For the weight function $W_\g$ on the unbounded domain $G$, minimal cubature rules of degree $4n-1$ 
exist, as shown in the next theorem. To state the theorem, we need a notation. For $n \in \NN_0$, let $x_{k,n}$ 
be the zeros of the orthogonal polynomial $p_n(w;x)$, which are in the support set $[1,\infty)$ of the weight 
function $w$. We define $\theta_{k,n}$ by 
$$
       x_{k,n} = \cosh \t_{k,n}, \qquad 1 \le k \le n. 
$$
Since $x_{k,n} > 1$, it is evident that $\t_{k,n}$ are well defined. We then define 
\begin{align} \label{stjk}
  s_{j,k}: =  \cosh \tfrac{\t_{j,n}-\t_{k,n}}{2} \quad\hbox{and}\quad
                t_{j,k} := \cosh \tfrac{\t_{j,n} + \t_{k,n}}{2}.
\end{align}

\begin{thm} \label{thm:cubaCW}
For $\CW_{-\frac12}$ on $G$, we have the minimal cubature rule of degree $4n-1$ 
with $\dim \Pi_{2n-1}^2 + n$ nodes, 
\begin{align} \label{MinimalCuba2-}
  c_w^2 \int_{G}  f(x, y) \CW_{-\frac12}(x,y) dx dy = & \frac14 \sum_{k=1}^n \mathop{ {\sum}' }_{j=1}^k 
     \l_{j,n} \l_{k,n} \left[ f( s_{j,k}, t_{j,k})+  f( t_{j,k}, s_{j,k})    \right .  \\  
        & \quad + \left.  f( - s_{j,k}, - t_{j,k})+  f(- t_{j,k},- s_{j,k})  \right].      \notag
\end{align}
For $\CW_{\frac12}$ on $G$, we have the minimal cubature rule of degree $4n-3$ with 
$\dim \Pi_{2n-3}^2 + n$ nodes, 
\begin{align} \label{MinimalCuba2+}
  c_w^2 \int_{G}  f(x, y) \CW_{\frac12}(x,y) dx dy = & \frac14 \sum_{k=2}^n \sum_{j=1}^{k-1} 
     \l_{j,k}  \left[ f( s_{j,k}, t_{j,k})+  f( t_{j,k}, s_{j,k})    \right .  \\  
        & \quad + \left.  f( - s_{j,k}, - t_{j,k})+  f(- t_{j,k},- s_{j,k})  \right], \notag
\end{align} 
where $\lambda_{j,k} = \l_{j,n} \l_{k,n} (\cosh \t_{j,n} - \cosh \t_{k,n})^2$. 
\end{thm}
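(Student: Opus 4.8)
The plan is to deduce both rules from the Gaussian cubature of \thmref{thm:Gaussian} on $\Omega$ by combining the symmetry of $\CW_{\pm\frac12}$ with the change of variables of \lemref{Int-Para-cube}. Both $G$ and $\CW_\g$ are invariant under the Klein four-group generated by the reflection $(x,y)\mapsto(y,x)$ and the central symmetry $(x,y)\mapsto(-x,-y)$; write $\sigma_1,\dots,\sigma_4$ for its elements. Each $\sigma_i$ is $\CW_\g$-measure preserving, so $\int_G f\,\CW_\g\,dxdy=\int_G f^\sharp\,\CW_\g\,dxdy$ where $f^\sharp:=\tfrac14\sum_i f\circ\sigma_i$; likewise the nodes below occur in full group-orbits carrying equal weights, so both sides of the asserted identity depend on $f$ only through $f^\sharp$. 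Hence it suffices to prove each rule for invariant integrands.

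The algebraic heart is that the ring of polynomials invariant under this group is exactly $\RR[\,2xy,\ x^2+y^2-1\,]$: in the rotated coordinates $(x+y)/\sqrt2,\,(x-y)/\sqrt2$ the group acts by independent sign changes, so its invariants are generated by $(x+y)^2$ and $(x-y)^2$, equivalently by $u=2xy$ and $v=x^2+y^2-1$. Thus $f^\sharp(x,y)=h(2xy,\,x^2+y^2-1)$ for a unique polynomial $h$; since $u,v$ have degree $2$ and $f^\sharp$ retains only the even-degree part of $f$, one gets $h\in\Pi_{2n-1}^2$ whenever $f\in\Pi_{4n-1}^2$. Now \eqref{Int-P-Q} turns $c_w^2\int_G h(2xy,x^2+y^2-1)\,\CW_{-\frac12}$ into $c_w^2\int_\Omega h\,W_{-\frac12}\,dudv$, to which \eqref{GaussCuba-} applies verbatim, giving $\sum_{k=1}^n\mathop{{\sum}'}_{j=1}^k \l_{j,n}\l_{k,n}\,h(u_{j,k},v_{j,k})$.

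It remains to translate nodes and values back to $(x,y)$. Parametrizing $x=\cosh\t$, $y=\cosh\phi$ as in \eqref{theta-phi}, the pair $\bigl(\cosh(\t-\phi),\cosh(\t+\phi)\bigr)$ are the two roots of $\xi^2-u_{j,k}\xi+v_{j,k}=0$, namely $\{x_{j,n},x_{k,n}\}=\{\cosh\t_{j,n},\cosh\t_{k,n}\}$; solving $\t\mp\phi=\pm\t_{j,n},\pm\t_{k,n}$ and applying the group gives the four $G$-preimages $(s_{j,k},t_{j,k})$, $(t_{j,k},s_{j,k})$, $(-s_{j,k},-t_{j,k})$, $(-t_{j,k},-s_{j,k})$ of \eqref{stjk}. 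Since $h(u_{j,k},v_{j,k})=f^\sharp$ at any one preimage equals $\tfrac14$ the sum of $f$ over all four, each Gaussian node contributes exactly the bracketed four-term average in \eqref{MinimalCuba2-}. Counting distinct nodes—four per pair $(j,k)$, the diagonal $j=k$ yielding $(\pm1,\pm x_{k,n}),(\pm x_{k,n},\pm1)$—gives $2n(n+1)=\dim\Pi_{2n-1}^2+n$, which meets the lower bound \eqref{lwbd} for a centrally symmetric weight at precision $4n-1$, so by \eqref{lwbd} (equivalently by the common-zero criterion of \thmref{thm:minimalCuba}) the rule is minimal. The $\CW_{\frac12}$ case runs identically, now starting from the rule \eqref{GaussCuba+} and the off-diagonal pairs $1\le j<k\le n$.

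The main point requiring care is the degree bookkeeping in the symmetrization: one must verify that $f\mapsto h$ lands in \emph{precisely} the polynomial space on which the relevant Gaussian rule of \thmref{thm:Gaussian} is exact, which for $\CW_{\frac12}$ means reconciling the parity drop caused by $-I\in\{\sigma_i\}$ with the stated degree of \eqref{GaussCuba+}. One must also confirm that the four preimages of each $\Omega$-node are distinct points of $G$ and that distinct pairs $(j,k)$ give distinct orbits, so that the node count is exactly the lower bound \eqref{lwbd} and not larger; this sharpness is what upgrades the rule from merely valid to minimal.
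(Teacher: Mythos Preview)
Your argument is correct and follows essentially the same route as the paper: start from the Gaussian rules \eqref{GaussCuba-}--\eqref{GaussCuba+} on $\Omega$, pull back via \lemref{Int-Para-cube}, identify the nodes through the hyperbolic identities \eqref{theta-phi}, and then pass from invariant polynomials to all of $\Pi_{4n-1}^2$. The only cosmetic difference is that where the paper invokes Sobolev's theorem on invariant cubature rules as a black box, you unpack it explicitly---naming the Klein four-group, computing its invariant ring via the rotated coordinates, and checking that the right-hand side is already group-symmetrized---which makes the argument more self-contained; your Vieta-style recovery of the preimages is likewise equivalent to the paper's direct verification of the two $\cosh$ identities.
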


\begin{proof}
We prove only the case of $\CW_{-\f12}$, the case of $\CW_{\f12}$ is similar. Our starting point 
is the Gaussian cubature rule in \eqref{GaussCuba-}, which gives, by \eqref{Int-P-Q},
\begin{equation*}
   c_w^2 \int_{G} f(2xy, x^2+y^2-1) \CW_{-\frac12}(x,y) dx dy = \sum_{k=1}^n \mathop{ {\sum}' }_{j=1}^k  
          \l_{j,n} \l_{k,n} f(u_{j,k}, v_{j,k}), \quad f \in \Pi_{2n-1}^2. 
\end{equation*}
By \eqref{theta-phi} or by direct verification,
\begin{align*}
    \cosh \t_{j,n} + \cosh \t_{k,n} & = 2 \cosh \tfrac{\t_{j,n}-\t_{k,n}}{2}
        \cosh \tfrac{\t_{j,n} + \t_{k,n}}{2}  \\
   \cosh \t_{j,n} \cosh \t_{k,n} & = \cosh^2 \tfrac{\t_{j,n}-\t_{k,n}}{2} +
        \cosh^2 \tfrac{\t_{j,n} + \t_{k,n}}{2} -1 
\end{align*}
which implies that 
$$
 u_{j,k} = x_{j,n} + x_{k,n} = 2 s_{j,k}t_{j,k} \quad\hbox{and}\quad v_{j,k}  = x_{j,n} x_{k,n} = s_{j,k}^2+t_{j,k}^2 -1.
$$
Consequently, the above cubature rule can be written as 
\begin{equation*}
   c_w^2 \int_{G} f(2xy, x^2+y^2-1) \CW_{-\frac12}(x,y) dx dy = \sum_{k=1}^n \mathop{ {\sum}' }_{j=1}^k  
          \l_k \l_j f(2 s_{j,k}t_{j,k}, s_{j,k}^2 + t_{j,k}^2 -1) 
\end{equation*}
for all $f \in \Pi_{2n-1}^2$. For $f \in \Pi_{2n-1}^2$, the polynomial $f(2xy x^2+y^2-1)$ is of degree $4n-1$. 
Since the polynomials $f(2xy, x^2+y^2-1)$ are symmetric polynomials and all symmetric polynomials in
$\Pi_{4n-1}^2$ can be written in this way, we have established \eqref{MinimalCuba2-} for symmetric
polynomials. By the Sobolev's theorem on invariant cubature rules, this establishes \eqref{MinimalCuba2-} 
for all polynomials in $\Pi_{4n-1}^2$. 
\end{proof}

The number of nodes of the cubature rule in \eqref{MinimalCuba2-} is precisely 
$$
    N = \dim \Pi_{2n-1}^2+n =  \dim \Pi_{2n-1}^2 +  \left \lfloor \frac{2n}2  \right \rfloor 
$$
which is the lower bounded of \eqref{lwbd} with $n$ replaced by $2n$. Thus, \eqref{MinimalCuba2-}
attains the lower bound \eqref{lwbd}. Similarly, \eqref{MinimalCuba2+} attains the lower bound 
\eqref{lwbd} with $n$ replaced by $2n-1$. 

It turns out that a basis of orthogonal polynomials with respect to $\CW_{\g}$ can be given explicitly. 
We need to define three more weight functions associated with $W_\g$.  
\begin{align}\label{Wij}
\begin{split}
  W_\g^{(1,1)}(u,v) &  := (1-u+v)(1+u+v)W_\g(u,v), \\
  W_\g^{(1,0)}(u,v)  & := (1-u+v)W_\g(u,v), \qquad \qquad (u,v) \in \Omega,\\
  W_\g^{(0,1)}(u,v)  & :=  (1+u+v)W_\g(u,v).
\end{split}  
\end{align} 
Under the change of variables $u = x+y$ and $v = xy$, $1-u+v = (x-1)(y-1)$ and $1+u+v = (x+1)(y+1)$. 
The three weight functions in \eqref{Wij} are evidently of the same type as $W_\g$. We denote by 
$\{P_{k,n}^{(\g)}: 0 \le k \le n\}$ an orthonormal basis of $\CV_n(W_\g)$ under $ \la \cdot, \cdot \ra_{W_\g}$. 
For $0 \le k \le n$, we further denote by $P_{k,n}^{(\g),1,1}$, $P_{k,n}^{(\g),1,0,}$, $P_{k,n}^{(\g),0,1}$ the 
orthonormal polynomials of degree $n$ with respect to $\la f, g \ra_W$ for $W = W_\g^{(1,1)}$, 
$W_\g^{(1,0)}$, $W_\g^{(0,1)}$, respectively. 
  
\begin{thm} \label{thm:Qop}
For $n = 0,1,\ldots$, a mutually orthogonal basis of $\CV_{2n}(\CW_{\g})$ is given by
\begin{align} \label{Qeven2}
\begin{split}
         {}_1Q_{k,2n}^{(\g)}(x,y):= & P_{k,n}^{(\g)}(2xy, x^2+y^2 -1), \qquad 0 \le k \le n, \\
         {}_2Q_{k,2n}^{(\g)}(x,y) := &  (x^2-y^2)  P_{k,n-1}^{(\g),1,1}(2xy, x^2+y^2 -1),  \quad 0 \le k \le n-1, 
 \end{split}
\end{align}
and a mutually orthogonal basis of $\CV_{2n+1}(\CW_{\g})$ is given by 
\begin{align} \label{Qodd2}
 \begin{split}
     &  {}_1Q_{k,2n+1}^{(\g)}(x,y):=  (x+ y) P_{k,n}^{(\g),0,1}(2xy, x^2+y^2 -1), \qquad  0 \le k \le n,  \\
     &  {}_2Q_{k,2n+1}^{(\g)}(x,y):=  (x-y) P_{k,n}^{(\g),1,0}(2xy, x^2+y^2 -1), \qquad 0 \le k \le n. 
 \end{split}
\end{align}
\end{thm}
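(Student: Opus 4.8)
The plan is to exploit the two commuting symmetries of $\CW_\g$ on $G$: the central symmetry $\sigma_1\colon(x,y)\mapsto(-x,-y)$ and the coordinate swap $\sigma_2\colon(x,y)\mapsto(y,x)$, both of which fix $G$ and leave $\CW_\g$ invariant. Under $\sigma_1$ the quantities $2xy,\ x^2+y^2-1,\ x^2-y^2$ are even while $x\pm y$ are odd; under $\sigma_2$ the quantities $2xy,\ x^2+y^2-1,\ x+y$ are symmetric while $x-y$ and $x^2-y^2$ are antisymmetric. Consequently ${}_1Q_{k,2n}^{(\g)}$ is even-symmetric, ${}_2Q_{k,2n}^{(\g)}$ even-antisymmetric, ${}_1Q_{k,2n+1}^{(\g)}$ odd-symmetric, and ${}_2Q_{k,2n+1}^{(\g)}$ odd-antisymmetric. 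Because $\CW_\g$ is invariant under both involutions, polynomials from different parity classes are automatically orthogonal; in particular every ${}_1Q$ is orthogonal to every ${}_2Q$ of the same degree. Since the listed families contain exactly $\dim\CV_{2n}^2=2n+1$ and $\dim\CV_{2n+1}^2=2n+2$ members, it will remain only to prove orthogonality within each family and orthogonality to all polynomials of lower degree.

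For orthogonality within a family I would use the identities, valid under $u=2xy$, $v=x^2+y^2-1$,
\[
  (x+y)^2=1+u+v,\qquad (x-y)^2=1-u+v,\qquad (x^2-y^2)^2=(1-u+v)(1+u+v),
\]
which show that multiplying $\CW_\g$ by $(x+y)^2$, $(x-y)^2$, or $(x^2-y^2)^2$ yields precisely the pullbacks $\CW_\g^{(0,1)}$, $\CW_\g^{(1,0)}$, $\CW_\g^{(1,1)}$ of the modified weights in \eqref{Wij}, where $\CW_\g^{(i,j)}(x,y):=W_\g^{(i,j)}(2xy,x^2+y^2-1)|x^2-y^2|$. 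Thus, writing the $P$'s as evaluated at $(2xy,x^2+y^2-1)$,
\[
  \la {}_2Q_{k,2n}^{(\g)},\, {}_2Q_{l,2n}^{(\g)}\ra_{\CW_\g}
   =\int_G P_{k,n-1}^{(\g),1,1}\,P_{l,n-1}^{(\g),1,1}\,\CW_\g^{(1,1)}\,dx\,dy,
\]
and applying \lemref{Int-Para-cube} to the weight $W_\g^{(1,1)}$ converts this to $\la P_{k,n-1}^{(\g),1,1}, P_{l,n-1}^{(\g),1,1}\ra_{W_\g^{(1,1)}}=\delta_{k,l}$. The identical reduction, using $W_\g$, $W_\g^{(0,1)}$, and $W_\g^{(1,0)}$ in place of $W_\g^{(1,1)}$, handles the remaining three families and in fact shows each family is orthonormal.

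It then remains to show each $Q$ is orthogonal to every polynomial of strictly smaller degree. For this I would record the elementary description of the four parity classes: an even-symmetric polynomial equals $\phi(2xy,x^2+y^2-1)$ for some polynomial $\phi$; an even-antisymmetric one equals $(x^2-y^2)\psi(2xy,x^2+y^2-1)$; an odd-symmetric one equals $(x+y)\psi(2xy,x^2+y^2-1)$; and an odd-antisymmetric one equals $(x-y)\psi(2xy,x^2+y^2-1)$, where the divisibility statements follow since a symmetric polynomial forced to vanish on $x=\pm y$ is divisible by $x\mp y$. Given $g\in\Pi_{2n-1}^2$, only its even-symmetric part---of degree at most $2n-2$ and hence of the form $\phi(2xy,x^2+y^2-1)$ with $\phi\in\Pi_{n-1}^2$---pairs nontrivially with ${}_1Q_{k,2n}^{(\g)}$, and \lemref{Int-Para-cube} gives $\la P_{k,n}^{(\g)},\phi\ra_{W_\g}=0$ because $P_{k,n}^{(\g)}\in\CV_n(W_\g)$. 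The remaining cases are the same after the degree bookkeeping; for example the even-antisymmetric part $(x^2-y^2)\psi$ of such a $g$ forces $\psi\in\Pi_{n-2}^2$, which is orthogonal to $P_{k,n-1}^{(\g),1,1}\in\CV_{n-1}(W_\g^{(1,1)})$, and similarly for the families in \eqref{Qodd2}.

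I expect the main obstacle to be this last step: one must verify that each parity class is generated exactly by the two invariants $2xy$ and $x^2+y^2-1$ together with a single prefactor among $x+y$, $x-y$, $x^2-y^2$, and then track that the degree of the reduced factor $\phi$ or $\psi$ drops far enough to sit strictly below the degree of the relevant base orthogonal polynomial. Once this is settled, the dimension count together with the mutual and lower-degree orthogonality established above shows that \eqref{Qeven2} and \eqref{Qodd2} are indeed mutually orthogonal bases of $\CV_{2n}(\CW_\g)$ and $\CV_{2n+1}(\CW_\g)$.
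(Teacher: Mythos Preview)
Your proposal is correct and follows essentially the same route as the paper: the key identities $(x\pm y)^2=1\pm u+v$ and $(x^2-y^2)^2=(1-u+v)(1+u+v)$, together with Lemma~\ref{Int-Para-cube}, reduce each orthogonality relation to the defining orthogonality of the $P_{k,n}^{(\g),i,j}$ against $W_\g^{(i,j)}$, with the symmetry group $\langle\sigma_1,\sigma_2\rangle$ handling the cross terms. The paper's own argument is a one-paragraph sketch invoking exactly these ingredients and deferring the details to Theorem~3.4 of \cite{X12b}; your write-up simply makes the $\ZZ_2\times\ZZ_2$ parity decomposition and the degree bookkeeping explicit.
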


Under the mapping $(u,v) \mapsto (2xy, x^2+y^2-1)$, it is easy to see that $W_\g^{(1,1)}(u,v)$
becomes $(x^2-y^2)^2 \CW_\g(x,y)$, $W_\g^{(1,0)}(u,v)$ becomes $(x-y)^2 \CW_\g(x,y)$,
and  $W_\g^{(0,1)}(u,v)$ becomes $(x+y)^2 \CW_\g(x,y)$. Hence, using Lemma \ref{Int-Para-cube}, 
the proof can be deduced from the orthogonality of $P{k,n}^{(\g), i,j}$ and symmetry of the integrals 
against $W_\g^{(i,j)}$, similar to the proof of Theorem 3.4 in \cite{X12b}.

Combining with \eqref{OP-1/2} and \eqref{OP+1/2}, we can express orthogonal polynomials
${}_iQ_{k,n}^{(\pm \f12)}$ in terms of orthogonal polynomials in one variables. For example,
in the case of $\CW_{\a,\g}$ in \eqref{CW_a,g}, we can express even degree orthogonal 
polynomials in terms of the Laguerre polynomials. 
 
\begin{prop}
Let $\a > -1$. A mutually orthogonal basis of $\CV_{2n}(\CW_{\a,-\frac12})$ is given 
by, for $0 \le k \le n$ and $0 \le k \le n-1$, respectively,  
\begin{align*} 
    & {}_1Q_{k,2n}^{(\a,-\frac12)}(\cosh\t+1,\cosh \phi +1) = L_n^{(\a)} (\cosh (\t - \phi)) L_k^{(\a)}(\cosh (\t+\phi))  \\
    &    \qquad\qquad  \qquad\qquad \qquad\qquad  \qquad 
          +  L_k^{(\a)} (\cosh (\t - \phi)) L_n^{(\a)}(\cosh (\t+\phi)), \\
    & {}_2Q_{k,2n}^{(\a,-\frac12)}(\cosh\t +1,\cosh \phi+1)  = 
      (x^2-y^2) \left[ L_{n-1}^{(\a+1)} (\cosh (\t - \phi)) L_k^{(\a+1)} (\cosh (\t+\phi)) \right. \\ 
    &    \qquad\qquad  \qquad\qquad \qquad\qquad  \qquad 
        \left. +  L_k^{(\a+1)} (\cosh (\t - \phi)) L_{n-1}^{(\a+1)} (\cosh (\t+\phi)) \right].
\end{align*}
\end{prop}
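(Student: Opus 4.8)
The plan is to obtain the Proposition directly from Theorem~\ref{thm:Qop}, specialized to $\g=-\f12$ and to the shifted Laguerre weight $w=w_\a$, by substituting the explicit product formula \eqref{OP-1/2} for the one-variable orthogonal polynomials and then parametrizing the two variables by hyperbolic cosines as in Lemma~\ref{Int-Para-cube} and \eqref{theta-phi}. No new two-dimensional orthogonality is needed, since Theorem~\ref{thm:Qop} already furnishes a mutually orthogonal basis of $\CV_{2n}(\CW_{\a,-\f12})$; the whole content is therefore a chain of one-variable identifications.

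For the family ${}_1Q_{k,2n}^{(\a,-\f12)}$, Theorem~\ref{thm:Qop} gives $P_{k,n}^{(-\f12)}(2xy,x^2+y^2-1)$, and \eqref{OP-1/2} writes $P_{k,n}^{(-\f12)}(\xi+\eta,\xi\eta)=p_n(\xi)p_k(\eta)+p_n(\eta)p_k(\xi)$ with $p_m=p_m(w_\a;\cdot)$. Since $p_m(w_\a;t)=L_m^{(\a)}(t-1)$, the only remaining task is to identify the latent variables $\xi,\eta$ attached to the point $(2xy,x^2+y^2-1)$. Writing $x=\cosh\t$, $y=\cosh\phi$, the identities \eqref{theta-phi} show that $2xy$ and $x^2+y^2-1$ are exactly the sum and product of $\cosh(\t-\phi)$ and $\cosh(\t+\phi)$, so $\{\xi,\eta\}=\{\cosh(\t-\phi),\cosh(\t+\phi)\}$. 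Substituting yields the stated symmetric combination of Laguerre polynomials in $\cosh(\t\mp\phi)$, up to the shift inherent in the shifted weight $w_\a$.

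For ${}_2Q_{k,2n}^{(\a,-\f12)}$ I would run the same argument on $(x^2-y^2)P_{k,n-1}^{(-\f12),1,1}(2xy,x^2+y^2-1)$. Here $P_{k,n-1}^{(-\f12),1,1}$ is orthogonal with respect to $W_{-\f12}^{(1,1)}$, and by \eqref{Wij} together with $1-u+v=(x-1)(y-1)$ and $1+u+v=(x+1)(y+1)$ this is once more a product-type ($\g=-\f12$) weight; hence \eqref{OP-1/2} applies verbatim, now with the one-variable weight $\wt w(t)=(t^2-1)\,w_\a(t)$ replacing $w_\a$. The prefactor $(x^2-y^2)$ is merely carried along, and the same hyperbolic-cosine substitution produces the arguments $\cosh(\t\mp\phi)$.

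The one genuinely substantive step, and the point where I expect the difficulty to lie, is the identification of the one-variable orthogonal polynomials of $\wt w(t)=(t^2-1)w_\a(t)$ with the shifted Laguerre polynomials $L^{(\a+1)}(\cdot-1)$. This is the Laguerre analogue of the parameter shift $(\a,\b)\mapsto(\a+1,\b+1)$ caused by multiplying the Jacobi weight by $(1-t)(1+t)$ in the bounded case treated in \cite{X12b}; here $t^2-1=(t-1)(t+1)$ contributes the clean factor $(t-1)$ that raises the Laguerre exponent to $\a+1$, and this reduction must be justified with care before the two families can be assembled and the symmetry of the resulting expressions in $\t$ and $\phi$ invoked to finish, exactly as in the proof of Theorem~3.4 of \cite{X12b}.
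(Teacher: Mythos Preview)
Your plan is exactly the paper's own (very brief) argument: specialize Theorem~\ref{thm:Qop} to $\g=-\f12$ and $w=w_\a$, insert the product representation \eqref{OP-1/2}, and read off the arguments $\cosh(\t\pm\phi)$ via the hyperbolic substitution \eqref{theta-phi}. For ${}_1Q_{k,2n}^{(\a,-\f12)}$ nothing more is needed, and your derivation matches the paper's.

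The step you isolate for ${}_2Q_{k,2n}^{(\a,-\f12)}$ is a genuine gap, and the paper does not close it either. Under $u=\xi+\eta$, $v=\xi\eta$ one has $(1-u+v)(1+u+v)=(\xi^2-1)(\eta^2-1)$, so $W_{-\f12}^{(1,1)}$ is again of product type, with the one--variable weight
\[
\wt w(t)=(t^2-1)\,w_\a(t)=(t+1)(t-1)^{\a+1}e^{-(t-1)}.
\]
Because of the surviving factor $(t+1)$ this is \emph{not} the shifted Laguerre weight $w_{\a+1}(t)=(t-1)^{\a+1}e^{-(t-1)}$, and its orthogonal polynomials are not $L_m^{(\a+1)}(t-1)$. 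Your Jacobi analogy breaks down here: in the bounded case the factor $1-t^2$ is absorbed entirely into the Jacobi parameters, whereas on $[1,\infty)$ the factor $t+1$ has no Laguerre counterpart. This is exactly the obstruction the paper itself acknowledges in the sentence following the Proposition (for the weight $(t+1)(t-1)^\a e^{-(t-1)}$ arising in the odd-degree family); the same obstruction applies verbatim with $\a$ replaced by $\a+1$. So the route you outline faithfully reproduces the paper's derivation, but the identification $p_m(\wt w;\cdot)=L_m^{(\a+1)}(\cdot-1)$ that both you and the paper invoke for the second family does not hold as written.
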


In these formulas we used $x = \cosh \t -1$ and $y = \cosh \phi -1$, so that $L_n^\a(x-1) = L_n^\a (\cosh \t)$
and $L_n^\a(y-1) = L_n^\a (\cosh \phi)$ and we can then use \eqref{theta-phi}. Note, however, we cannot 
express the odd degree ones in terms of Laguerre polynomials. In fact, for ${}_2Q_{k, 2n+1}$, we need 
orthogonal polynomials of one variable with respect to $w(x) = (x+1)(x-1)^\a e^{- (x-1)}$, which is not a 
shift of the Laguerre weight function. 

\begin{cor}
For the weight function $\CW_{-\f12}$, the nodes of the minimal cubature formulas \eqref{MinimalCuba2-} 
are common zeros of orthogonal polynomials $\{ {}_1Q_{k,2n}^{(-\frac12)}: 0 \le k \le n\}$. 
\end{cor}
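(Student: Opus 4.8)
The plan is to reduce everything to facts already in hand about the Gaussian nodes on $\Omega$, using the defining relation ${}_1Q_{k,2n}^{(-\frac12)}(x,y) = P_{k,n}^{(-\frac12)}(2xy, x^2+y^2-1)$ from \eqref{Qeven2}. Because each ${}_1Q_{k,2n}^{(-\frac12)}$ is a pullback of $P_{k,n}^{(-\frac12)}$ under the map $\Phi:(x,y)\mapsto(2xy, x^2+y^2-1)$, its value at a point of $G$ is controlled entirely by the value of $P_{k,n}^{(-\frac12)}$ at the image in $\Omega$. The first step is therefore to locate the images under $\Phi$ of the nodes appearing in \eqref{MinimalCuba2-}.

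For the node $(s_{j,k}, t_{j,k})$ I would invoke the two identities established in the proof of \thmref{thm:cubaCW}, namely $2 s_{j,k} t_{j,k} = x_{j,n}+x_{k,n} = u_{j,k}$ and $s_{j,k}^2 + t_{j,k}^2 - 1 = x_{j,n} x_{k,n} = v_{j,k}$. Thus $\Phi(s_{j,k}, t_{j,k}) = (u_{j,k}, v_{j,k})$ is precisely the Gaussian node in $\Omega$. As recorded in the discussion following \thmref{thm:Gaussian}, the set $\{(u_{j,k}, v_{j,k}): 1\le j\le k\le n\}$ consists of the common zeros of the basis $\{P_{k,n}^{(-\frac12)}: 0\le k\le n\}$ of $\CV_n(W_{-\frac12})$. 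Combining these gives ${}_1Q_{k,2n}^{(-\frac12)}(s_{j,k}, t_{j,k}) = P_{k,n}^{(-\frac12)}(u_{j,k}, v_{j,k}) = 0$ for all $0\le k\le n$.

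The remaining three nodes are then handled by the invariances of $\Phi$. Both coordinates $2xy$ and $x^2+y^2-1$ are unchanged under the interchange $(x,y)\mapsto(y,x)$ and under the central reflection $(x,y)\mapsto(-x,-y)$, so $\Phi$ takes the common value $(u_{j,k}, v_{j,k})$ at all four points $(s_{j,k}, t_{j,k})$, $(t_{j,k}, s_{j,k})$, $(-s_{j,k}, -t_{j,k})$ and $(-t_{j,k}, -s_{j,k})$. Consequently every one of these points is a common zero of $\{{}_1Q_{k,2n}^{(-\frac12)}: 0\le k\le n\}$, and hence so is every node of \eqref{MinimalCuba2-}.

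Finally, to promote this inclusion to an exact identification I would count. By \thmref{thm:Qop} the family $\{{}_1Q_{k,2n}^{(-\frac12)}: 0\le k\le n\}$ consists of $n+1 = \lfloor\frac{2n+1}{2}\rfloor + 1$ orthogonal polynomials of degree exactly $2n$, and the rule \eqref{MinimalCuba2-} is a minimal cubature rule of degree $4n-1 = 2(2n)-1$ attaining \eqref{lwbd} with $n$ replaced by $2n$. Applying \thmref{thm:minimalCuba} with $n$ replaced by $2n$ to this rule, whose existence is guaranteed by \thmref{thm:cubaCW}, its nodes are characterized as the common zeros of a family of $n+1$ orthogonal polynomials of degree $2n$, and the computation above exhibits $\{{}_1Q_{k,2n}^{(-\frac12)}\}$ as one such family. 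The only point needing attention is that the $\dim\Pi_{2n-1}^2 + n$ nodes listed in \eqref{MinimalCuba2-} are genuinely distinct; in particular one must check that the degenerate diagonal case $j=k$, where $s_{j,j}=1$ and $t_{j,j}=x_{j,n}$, contributes four new points disjoint from the off-diagonal ones, so that the cardinalities match and the node set coincides with the full common zero set. I expect this bookkeeping of the degenerate diagonal contribution, rather than any of the vanishing computations, to be the only delicate part.
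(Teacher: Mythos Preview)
Your argument in the first three paragraphs is correct and matches the reasoning the paper leaves implicit (the corollary is stated without proof): pull back via ${}_1Q_{k,2n}^{(-\frac12)}(x,y) = P_{k,n}^{(-\frac12)}(2xy,x^2+y^2-1)$, use the identities from the proof of \thmref{thm:cubaCW} to see that $\Phi$ sends each of the four listed nodes to $(u_{j,k},v_{j,k})$, and invoke the fact (recorded after \thmref{thm:Gaussian}) that these are common zeros of $\{P_{k,n}^{(-\frac12)}\}$. That already establishes the corollary as stated.

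Your final paragraph overshoots. The corollary only asserts that the nodes \emph{are} common zeros of $\{{}_1Q_{k,2n}^{(-\frac12)}: 0\le k\le n\}$, not that they exhaust the common zero set, so the distinctness bookkeeping you flag as delicate is not needed here. Moreover, the way you invoke \thmref{thm:minimalCuba} is not quite sound: that theorem guarantees the nodes of a minimal rule are the common zeros of \emph{some} family of $\lfloor\frac{n+1}{2}\rfloor+1$ orthogonal polynomials of degree $n$, but it does not assert that every such family vanishing on the nodes has exactly those nodes as its full common zero set. Drop the last paragraph and the proof is complete.
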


An analogue result can be stated for the minimal cubature formulas \eqref{MinimalCuba2+}.


\begin{thebibliography}{99}

\bibitem{BP}
      B. Bojanov and G. Petrova,
       On minimal cubature formulae for product weight function,
       \textit{J. Comput. Appl. Math.} \textbf{85} (1997), 113--121.

\bibitem{K74}% \index{A}{Koornwinder, T. H.}%
	T. H. Koornwinder,  
	Orthogonal polynomials in two variables  which are eigenfunctions of 
	two algebraically independent partial differential operators, I, II,
        \textit{Proc. Kon. Akad. v. Wet., Amsterdam} \textbf{36} (1974).
        48--66.
 
\bibitem{LSX}
        H. Li. J. Sun and Y. Xu, 
        Discrete Fourier analysis, cubature and interpolation on a hexagon and 
        a triangle, {\it SIAM J. Numer. Anal.} {\bf 46} (2008), 1653--1681.  

\bibitem{M}
        H. M\"oller,
        Kubaturformeln mit minimaler Knotenzahl,
	\textit{Numer. Math.} \textbf{ 25} (1976), 185--200.

\bibitem{MP}
         C. R. Morrow and T. N. L. Patterson,
         Construction of algebraic cubature rules using polynomial ideal theory,
         \textit{SIAM J. Numer. Anal.}, \textbf{15} (1978), 953-976.

\bibitem{My} 
	I. P. Mysovskikh, 
	\textit{Interpolatory cubature formulas},  Nauka, Moscow, 1981.
 
\bibitem{SX} 
	H. J. Schmid and Y. Xu, 
	On bivariate Gaussian cubature formula,
	\textit{Proc. Amer. Math. Soc.} \textbf{122}  (1994), 833--842. 
	
\bibitem{Sobolev}
        S. L. Sobolev,
        Cubature formulas on the sphere which are invariant under transformations of finite rotation groups, 
        \textit{Dokl. Akad. Nauk SSSR}, \textbf{146} (1962), 310--313. 	
 	
\bibitem{St}
        A. H. Stroud, 
        {\it Approximate calculation of multiple integrals}, 
        Prentice-Hall, Inc., Englewood Cliffs, N.J., 1971.

\bibitem{X94}
        Y. Xu,
        \textit{Common zeros of polynomials in several variables and higher     
        dimensional quadrature},
	Pitman Research Notes in Mathematics Series, Longman, Essex,
        1994.
         
\bibitem{X12a}
         Y. Xu,
         Minimal Cubature rules and polynomial interpolation in two variables,
         \textit{J. Approx. Theory} \textbf{164} (2012), 6--30.	
	
\bibitem{X12b} 
	Y. Xu,
	Orthogonal polynomials and expansions for a family of weight functions in two variables,
         {\it Constr. Approx.} {\bf 36} (2012), 161--190.	
\end{thebibliography}
\end{document}